\def\be{\begin{equation}}
\def\ee{\end{equation}}
\def\l{\langle}
\def\r{\rangle}
\def\R{I\!\!R}
\def\N{I\!\!N}
\def\bea{\begin{eqnarray}}
\def\eea{\end{eqnarray}}
\def\cR{{\cal R}}
\def\cN{{\cal N}}
\def\12{\frac{1}{2}}
\newtheorem{remark}{Remark}
\newtheorem{definition}{Definition}
\newtheorem{proposition}{Proposition}
\newenvironment{proof}{\noindent\textbf{Proof.}
  }{\hspace*{\fill}$\spadesuit$ \\[2mm]}
\begin{document}
\begin{center}
{\Large \bf A note on Kaczmarz algorithm  with remotest set control sequence}

\vspace*{1cm}
{\small CONSTANTIN POPA}

\vspace*{1cm}
Ovidius University of Constanta, Blvd. Mamaia 124, Constanta 900527, Romania; {\tt cpopa@univ-ovidius.ro}

\vspace*{0.2cm}
              ``Gheorghe Mihoc - Caius Iacob'' Institute of Statistical Mathematics and Applied Mathematics of the Romanian Academy, Calea 13 Septembrie, Nr. 13, Bucharest 050711, Romania\\
\end{center}

{\bf Abstract.} 
In this paper we analyse the Kaczmarz projection algorithm with Remotest set and Random control of projection indices and provide a sufficient condition such that  each projection index appears infinitely many times during the iterations.

{\bf Keywords:} {Kaczmarz algorithm;  remotest set control}

{\bf MSC (2010):} {65F10;  65F20}

\section{Introduction}
\label{intro}

For   an $m \times n$ (real) matrix $A$ and  $b \in \R^m$  let
\begin{equation}
\label{1}
 Ax = b
\end{equation}
be a consistent system of linear equations and denote by  $S(A; b), x_{LS}$ the set of its solutions and  the minimal (Euclidean)  norm one ($\left\langle \cdot, \cdot \right\rangle$ and $\parallel \cdot \parallel$ will denote the Euclidean scalar product and norm on some space $\R^q$, respectively). Other notations used will be $A^T, A_i, A^j, \cR(A), \cN(A)$, ${rank}(A)$ for the transpose, $i$-th row, $j$-th column, range, null space and rank of $A$. The projection onto a nonempty closed convex set  $V$ will be denoted by $P_V$, and for $V=H_i = \left\{x \in \R^n, \left\langle x, A_i \right\rangle = b_i \right\}$ (the hyperplane determined by the $i$-th equation of the  system (\ref{1})) we know that
\begin{equation}\label{res:PHi}
P_{H_i}(x) = x - \frac{\left\langle x, A_i \right\rangle - b_i}{\left\Vert A_i \right\Vert ^2}A_i.
\end{equation}
 The Kaczmarz's  iterative method for numerical solution of  (\ref{1}) has the form from below.\\
{\bf Algorithm K.} \\
{\it Initialization:} $x^0 \in \R^n$ \\
{\it Iterative step:} for $k = 0, 1, \dots$ select $i_k \in \{ 1, 2, \dots, m \}$ and compute $x^{k+1}$ as 
        \be
        \label{gen}
        x^{k+1}= P_{H_{i_k}}(x^k).
        \ee
 There have been defined several classes of selection procedures for the indices $i_k$ (see \cite{censta, ycrow, ccp11, comb} and references therein). In this paper we will consider the Maximal Residual (remotest set) control and the Random control procedures, and provide a sufficient condition such that in the case of Kaczmarz's projection method  {\bf K}, they belong to the class of {\it control} selections from \cite{ccp11}. The paper is organized as follows: in section \ref{control} we give an equivalent formulation of the {\it  control} sequence definition from \cite{ccp11}. In section \ref{maxrez} we show that, for $x^0=0$ and  under  additional assumptions, the Maximal Residual (remotest set) selection or the Random selection is a {\it  control} w.r.t. section \ref{control}.

\section{Control sequences}
\label{control}

Let $\N$  denote the set of natural numbers $\{ 0, 1, 2, \dots, \}$. In \cite{ccp11} the following definition concerning control sequences was introduced.
\begin{definition}{\bf (D1)}
\label{def1}
Given a monotonically increasing sequence $\{\tau_{k}\}_{k \geq 0} \subset \N$, a mapping $i: \N \rightarrow \{1, 2, \dots, m\}$ is called a {\em control with respect to the sequence $\{\tau_{k}\}_{k \geq 0}$} if it defines a {\em  sequence $\{i(t)\}_{t \geq 0}$}, such that for all $k \geq 0$,
\begin{equation}\label{d1}
\{1, 2, \dots, m\} \subseteq \{i(\tau_k), i(\tau_k + 1), \dots, i(\tau_{k + 1} - 1)\}.
\end{equation}
\end{definition}
The next definiton, mentioned in \cite{bau} (see also \cite{222}) points-out on an important aspect of control sequences.
\begin{definition}{\bf (D2)}
\label{def2}
 A  mapping $i: \N \rightarrow \{1, 2, \dots, m\}$ is called a {\em random mapping} if any $i \in \{ 1, \dots, m \}$ appears infinitely many times in the set ${\cal{I}} = \{  i(k), k \geq 0 \}$.
\end{definition}
It is clear that, if the mapping $i$ is a control with respect to some sequence $\{\tau_{k}\}_{k \geq 0}$, then it is also a random mapping, according to  definition {(\bf D2)}. Indeed, if the sequence $\{\tau_{k}\}_{k \geq 0}$ is increassing then $\tau_{k+1} > \tau_k$ and the sets 
$\Delta_k = \{i(\tau_k), i(\tau_k + 1), \dots, i(\tau_{k + 1} - 1)\}, k \geq 0$ form a partition of $\N$ as in (\ref{d1}). 
Next proposition tell us about the reciprocal of this property, i.e. a random mapping is a  control according to the definition {(\bf D1)}.
\begin{proposition}
\label{d2d1}
Let ${\rm i}: \N \rightarrow \{1, 2, \dots, m\}$ be  a random mapping (according to {\bf (D2)}). Then it exists a  monotonically increasing sequence $\{\tau_{k}\}_{k \geq 0} \subset \N$  such that ${\rm i}$ is a    control  w.r.t. {\bf (D1)}.
\end{proposition}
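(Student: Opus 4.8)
The plan is to build the sequence $\{\tau_k\}_{k\geq 0}$ by recursion on $k$, at each stage choosing $\tau_{k+1}$ just large enough that the block of indices $i(\tau_k),\dots,i(\tau_{k+1}-1)$ already exhausts $\mm$. I would start by setting $\tau_0=0$. Then, assuming $\tau_k$ has been defined, the task is to produce some $\tau_{k+1}>\tau_k$ with $\mm\subseteq\{i(\tau_k),i(\tau_k+1),\dots,i(\tau_{k+1}-1)\}$, which is precisely the covering requirement (\ref{d1}).

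The single fact that does the work is an elementary reformulation of {\bf (D2)}: if every value $j\in\mm$ occurs infinitely often in the sequence $\{i(t)\}_{t\geq 0}$, then for each such $j$ and each threshold $N\in\N$ there is a $t\geq N$ with $i(t)=j$ — otherwise $j$ could occur only among the finitely many indices $t<N$, contradicting D2. Applying this with $N=\tau_k$, I would set $t_j:=\min\{\,t\geq\tau_k : i(t)=j\,\}$ for each $j\in\mm$ (well defined by the above), and then $T:=\max_{j\in\mm}t_j$. By construction every $j\in\mm$ equals $i(t_j)$ for some index $t_j$ with $\tau_k\leq t_j\leq T$, hence $\mm\subseteq\{i(\tau_k),i(\tau_k+1),\dots,i(T)\}$. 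Defining $\tau_{k+1}:=T+1$ then yields $\tau_{k+1}\geq\tau_k+1>\tau_k$ together with exactly the inclusion (\ref{d1}).

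This recursion produces a strictly increasing sequence $\{\tau_k\}_{k\geq 0}\subset\N$; strict monotonicity of a sequence of natural numbers gives $\tau_k\geq k$, so $\tau_k\to\infty$ and the blocks $\Delta_k=\{i(\tau_k),\dots,i(\tau_{k+1}-1)\}$ partition $\N$ in the sense used in Definition {\bf (D1)}. Therefore $i$ is a control with respect to $\{\tau_k\}_{k\geq 0}$, which is the claim.

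I do not expect a genuine obstacle here: the proposition is a direct unwinding of the two definitions, and the ``hard part'' amounts only to the two bookkeeping points handled above — the step from ``appears infinitely many times'' to ``appears at least once beyond any given index'', and the check that $\tau_{k+1}>\tau_k$ so that $\{\tau_k\}$ is legitimately increasing. If one wants a canonical construction, $\tau_{k+1}$ can instead be taken as the least integer exceeding $\tau_k$ for which (\ref{d1}) holds; this changes nothing essential in the argument.
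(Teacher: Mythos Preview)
Your proof is correct and follows essentially the same approach as the paper: both set $\tau_0=0$, reformulate {\bf (D2)} as ``every $j$ occurs beyond any given threshold'', and then recursively choose $\tau_{k+1}>\tau_k$ so that (\ref{d1}) holds. Your explicit choice $\tau_{k+1}=1+\max_{j}\min\{t\geq\tau_k:i(t)=j\}$ is in fact exactly the ``smallest natural number'' the paper invokes, so the two constructions coincide.
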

\begin{proof}
We will first write {\bf (D2)} in the following equivalent formulation: for any $i \in \{ 1, \dots, m \}$ it is true that
\begin{equation}\label{d2}
\forall ~ k \geq 1, ~\exists k_i \geq k ~{\rm s.t.}~  i_k = i.
\ee
We will now recursively define an increasing sequence $\{\tau_{k}\}_{k \geq 0} \subset \N$ as follows: for $k=0$ we set $\tau_0 = 0$; for $k=1$ let $\tau_1$ be the smallest natural number with the properties
\begin{equation}\label{d3}
\tau_1 >  \tau_0 ~{\rm and}~ \{1, 2, \dots, m\} \subseteq \{ i(\tau_0), \dots, i(\tau_1 - 1) \}.
\ee
Such a number $\tau_1$ exists according to the equivalent formulation  (\ref{d2}). In general, if we already have constructed $\tau_k$, then $\tau_{k+1}$ will be the smallest natural number such that 
\begin{equation}\label{d4}
\tau_{k+1} >  \tau_k ~{\rm and}~ \{1, 2, \dots, m\} \subseteq \{ i(\tau_k), \dots, i(\tau_{k+1} - 1) \},
\ee
 which is exactly the property  (\ref{d1}) of definition {\bf (D1)}, and the proof is complete.
\end{proof}
Based on the above proposition we will consider in the rest of the paper  as definition for  controls the equivalent formulation from   {\bf (D2)}. In this respect, the following two selection procedures will be analysed.
\begin{itemize}
\item {\bf Maximal Residual (remotest set) (\cite{a84}):} Select $i_{k} \in \{ 1, 2, \dots, m \}$ such that
\be\label{MR}
|\l A_{i_k}, x^{k-1} \r - b_{i_k}| = \max_{1 \leq i \leq m} |\l A_{i}, x^{k-1} \r - b_{i}|.
\ee
\item {\bf Random (\cite{strohm}):} Let the set $\Delta_m \subset \R^m$ be defined by  
\be
\label{0010}
\Delta_m = \{ x \in \R^m, x \geq 0, \sum_{i=1}^m = 1 \},
\ee
define the discrete probability distribution 
\begin{equation}\label{RK}
p \in \Delta_{m},\ p_{i} = \frac{\|A_{i}\|^{2}}{\|A\|^{2}_{F}}, 
\ i = 1, \dots, m,
\end{equation}
and select $i_k \in \{ 1, 2, \dots, m \}$ such that
\be\label{RK1}
i_{k} \sim p .
\ee 
\end{itemize}
The main aspect regarding the above two selection procedures  is concerned with the fact that the projection indices are generated recursively,  without no a priori information  on them. And, at least related to author's knowledge, there are no results saying that when the algorithm {\bf K} is applied with one or the other of  the above selection procedures, each projection index will appear infinitely many times.  
\section{The  Kaczmarz algorithm}
\label{maxrez}

We consider in this section Kaczmarz's projection algorithm  in which the Maximal Residual (remotest set) (\ref{MR}) or Random (\ref{RK})-(\ref{RK1})  procedure is used for selecting the projection indices in each iteration, and  with the initial approximation  $x^0=0$. 
In this case, in papers \cite{a84} and \cite{strohm} it is proved that the sequence $(x^k)_{k \geq 0}$ generated by algorithm {\bf K} converges to the minimal norm solution $x_{LS}$ of the system (\ref{1}). 
We will formulate a sufficient condition such that any of the above  selection procedures  satisfies {\bf (D2)}). For  $i \in \{ 1, \dots, m \}$  arbitrary fixed, let 
  ${A}^{(i)}: (m-1) \times n$, ${b}^{(i)} \in \R^{m-1}$ be the submatrix of $A$ without the $i$-th row, respectively the subvector of $b$ without the $i$-th component and  ${x}^{(i)}_{LS}$  the minimal norm solution of the  system  ${A}^{(i)} {x} = {b}^{(i)}$. \\
 {\bf Assumption C.} For any index $i \in \{ 1, \dots, m \}$ we have 
\be
\label{n11}
x_{LS} ~\neq~  {x}^{(i)}_{LS}.
\ee
\begin{proposition}
\label{pp1}
If the assumption {\bf C} holds, then any of the above two selection procedures within the Kaczmarz's iteration {\bf K} satisfies {\bf (D2)}.
\end{proposition}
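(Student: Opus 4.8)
The plan is to rely on Proposition~\ref{d2d1}: since belonging to the class of controls is equivalent to {\bf (D2)}, it suffices to show that, under {\bf C}, every index $r\in\{1,\dots,m\}$ is selected infinitely often in {\bf K}. I argue by contradiction: suppose some $r$ is selected only finitely many times, so there is $K\in\N$ with $i_k\neq r$ for all $k\ge K$.

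The Random rule is handled directly. Assumption~{\bf C} rules out $A_r=0$: if $A_r=0$ then consistency of (\ref{1}) forces $b_r=0$, deleting the $r$-th equation changes neither $S(A;b)$ nor its minimal norm element, and so $x^{(r)}_{LS}=x_{LS}$, contradicting (\ref{n11}). Hence $p_r=\|A_r\|^2/\|A\|_F^2>0$. Since the selections in (\ref{RK1}) are independent and $\sum_{k\ge 0}\Pr[i_k=r]=\sum_{k\ge 0}p_r=\infty$, the second Borel--Cantelli lemma gives $i_k=r$ for infinitely many $k$ almost surely, contradicting the choice of $K$; hence {\bf (D2)} holds for the Random rule.

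For the Maximal Residual rule the key remark is that, for $k\ge K$, the index $i_k$ selected by (\ref{MR}) satisfies $|\l A_{i_k},x^k\r-b_{i_k}|=\max_{1\le i\le m}|\l A_i,x^k\r-b_i|\ge\max_{i\neq r}|\l A_i,x^k\r-b_i|\ge|\l A_{i_k},x^k\r-b_{i_k}|$, the last inequality because $i_k\neq r$; so equality holds throughout, $i_k$ realizes the remotest set rule for the reduced (still consistent) system $A^{(r)}x=b^{(r)}$, and $(x^k)_{k\ge K}$ is a Kaczmarz iteration {\bf K} with remotest set control for that system started from $x^K$. I would then invoke the convergence theorem of \cite{a84} for the reduced system to get $x^k\to x^{(r)}_{LS}$, while \cite{a84} for the full system with $x^0=0$ gives $x^k\to x_{LS}$; uniqueness of the limit yields $x_{LS}=x^{(r)}_{LS}$, contradicting~{\bf C}, which completes the argument.

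The step I expect to be the main obstacle is the identification of the limit of the tail $(x^k)_{k\ge K}$ with $x^{(r)}_{LS}$. The cited results give the minimal norm solution only when the iteration starts at the origin, whereas here it restarts at the point $x^K$ produced by the first $K$ steps on the full system; in general one can assert only that the tail converges to the orthogonal projection of $x^K$ onto $S(A^{(r)};b^{(r)})=x^{(r)}_{LS}+\cN(A^{(r)})$. Closing the gap requires using that $x^0=0$, so that every iterate, $x^K$ in particular, lies in $\cR(A^T)$, and combining this with the geometry of $A^{(r)}x=b^{(r)}$ to force that projection to equal $x^{(r)}_{LS}$ again — which is precisely where Assumption~{\bf C} must enter; the same point is where one has to deal with the degenerate case in which (\ref{MR}) admits ties, letting the iteration rest at an exact solution with some index never selected again.
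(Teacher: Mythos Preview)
Your contradiction strategy for the Maximal Residual rule is exactly the paper's: assume some index $r$ is never selected from step $K$ on, regard $(x^k)_{k\ge K}$ as a Kaczmarz iteration on the reduced system $A^{(r)}x=b^{(r)}$, and compare the two limits $x_{LS}$ and $x^{(r)}_{LS}$ to contradict {\bf C}. The paper carries out precisely this argument, uniformly for both selection rules, and simply asserts (citing \cite{a84} and \cite{strohm}) that the tail converges to $x^{(r)}_{LS}$.

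The obstacle you isolate in your last paragraph is therefore not a defect of your write-up relative to the paper: the paper's proof has the very same gap and does not address it. Your diagnosis is in fact sharper than what the paper offers. Under {\bf C} the row $A_r$ cannot lie in the span of the remaining rows (otherwise consistency would force $S(A;b)=S(A^{(r)};b^{(r)})$ and hence $x_{LS}=x^{(r)}_{LS}$), so $\cR((A^{(r)})^T)\subsetneq\cR(A^T)$; since the first $K$ steps may have used row $r$, one only has $x^K\in\cR(A^T)$, not $x^K\in\cR((A^{(r)})^T)$, and the standard theory then yields convergence of the tail only to $P_{S(A^{(r)};b^{(r)})}(x^K)$, which need not equal $x^{(r)}_{LS}$. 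So the step the paper takes for granted genuinely requires further justification.

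Your treatment of the Random rule, by contrast, is a genuine improvement over the paper. The paper reuses the same limit-comparison argument (and hence inherits the same gap), whereas your Borel--Cantelli argument is self-contained: once {\bf C} forces $A_r\neq 0$ and thus $p_r>0$, independence of the draws gives $i_k=r$ infinitely often almost surely, with no appeal to convergence of the iterates at all. This route is shorter, avoids the problematic restart at $x^K$, and shows that for the Random rule {\bf (D2)} really only needs the nondegeneracy $A_r\neq 0$ rather than the full strength of {\bf C}.
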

\begin{proof}
Let us suppose that the conclusion of the proposition is not true. According to  (\ref{d2}) it exists an index $i_0 \in \{ 1, \dots, m \}$ and an integer $k_0 \geq 1$ such that, in the selection procedure of the {\bf K} algorithm iterations we have
\be
\label{n1}
i_k \neq i_0, ~\forall k \geq k_0.
\ee
Therefore,  the sequence $(x^k)_{k \geq k_0}$ is generated by the {\bf K} algorithm applied (only !) to the subsystem ${A}^{(i_0)} {x} = {b}^{(i_0)}$. By the theory from \cite{a84} and \cite{strohm}, respectively, it results that
\be
\label{n2}
\lim_{0 \geq k \rightarrow \infty} x^k =  x_{LS} = ~\lim_{k_0 \geq k \rightarrow \infty} x^k =  {x}^{(i)}_{LS},
\ee
hence
\be
\label{n3}
 x_{LS} =  {x}^{(i_0)}_{LS},
\ee
which contradicts (\ref{n11}) and completes the proof.
\end{proof}
In order to understand what means a condition like (\ref{n11}) we will analyse it in the particular case 
\be
\label{n3p}
m \leq n ~{\rm and}~ rank(A)=m,
\ee
 for which the system (\ref{1}) is consistent for any $b \in \R^m$. In this case for any index $i$  the matrix ${A}^{(i)}$ is also full-row rank, and  the system ${A}^{(i_0)} {x} = {b}^{(i_0)}$  also consistent. We will arbitrary fix the index $i \in \{ 1, \dots, m \}$ and denote by  $\tilde{A}, \tilde{b}, \tilde{x}_{LS}$ the elements  
${A}^{(i)}, {b}^{(i)}, {x}^{(i)}_{LS}$, respectively. Moreover, we will analyse the opposite assumption of  (\ref{n11}), namely 
\be
\label{n11p}
x_{LS} ~=~  \tilde{x}_{LS}.
\ee
What does this mean in terms of the matrix $A$ and right hand side $b$ ? For simplfying the presentation we will suppose that $i=m$ (this assumption is not too restrictive because it can be obtained by a row-permutation in $A$ and $b$, which does not affect the spectral properties of $A$ and the  solution set $S(A; b)$). Because $A^T$ is overdetermined and full-column rank, there exist an $n \times n$ orthogonal matrix $Q$ and the QR decomposition
$$
Q^T A^T = \left [ \begin{array}{c} R \\ 0 \\ \end{array} \right ] = 
\left [
\begin{array}{ccccc}
r_{11} & r_{12} & \dots & r_{1, m-1} & r_{1m}  \\
0  & r_{22} & \dots & r_{2, m-1} & r_{2m}  \\
\dots  & \dots & \dots & \dots & \dots  \\
 0 & 0 & \dots & r_{m-1, m-1} & r_{m-1, m} \\
  0 & 0 & \dots & 0 & r_{mm} \\
    0 & 0 & \dots & 0 & 0 \\
        \vdots &  &  & \vdots & \vdots \\
      0 & 0 & \dots & 0 & 0 \\
\end{array}
\right] = 
$$
\be
\label{n5}
 Q^T [ \tilde{A}^T | A_m ] = [ Q^T \tilde{A}^T | Q^T A_m ] = 
\left [ \begin{array}{ccc} 
\tilde{R} & & c \\ 
0 & & r_{mm}\\ 
0 & \dots & 0 \\
\vdots & & \vdots \\
0 & \dots & 0\\
\end{array} \right ] .
\ee
Therefore
\be
\label{n6}
Q^T \tilde{A}^T = \left [ \begin{array}{c} \tilde{R} \\ 0 \\ \end{array} \right ]
\ee
will be a QR decomposition for $\tilde{A}$, where
\be
\label{n7}
\tilde{R} = \left [
\begin{array}{cccc}
r_{11} & r_{12} & \dots & r_{1, m-1}   \\
0  & r_{22} & \dots & r_{2, m-1}   \\
\dots  & \dots & \dots & \dots  \\
 0 & 0 & \dots & r_{m-1, m-1}  \\
\end{array}
\right] ~{\rm and}~ c=(r_{1m}, r_{2m}, \dots, r_{m-1, m})^T.
\ee
Because $m \leq n$ and $A, \tilde{A}$ have full-row rank, we know that (see e.g. \cite{cpbook})
$$
A^+ = A^T (A A^T)^{-1}, ~~\tilde{A}^+ = \tilde{A}^T (\tilde{A} \tilde{A}^T)^{-1},
$$ hence 
\be
\label{n8}
x_{LS} = A^+ b = Q \left [ \begin{array}{c} {R^{-T}} b\\ 0 \\ \end{array} \right ], ~~\tilde{x}_{LS} = \tilde{A}^+ \tilde{b} = Q \left [ \begin{array}{c} \tilde{R}^{-T} \tilde{b} \\ 0 \\ \end{array} \right ] .
\ee
In our hypothesis (\ref{n11p}), and by using (\ref{n5}) and (\ref{n7}) we get from (\ref{n8}) the equality
\be
\label{n9}
{R^{-T}} b = \left [ \begin{array}{c} \tilde{R}^{-T} \tilde{b}  \\
 0  \\ \end{array} \right ], 
~{\rm where}~ R^T = \left [ \begin{array}{cc} 
\tilde{R}^{T}  & 0 \\
 c^T & r_{mm}\\ \end{array} \right ]: m \times m.
\ee
It can be easily shown that 
\be
\label{n10}
R^{-T} = (R^T)^{-1} =  \left [ \begin{array}{cc} 
\tilde{R}^{-T}  & 0 \\
 -\frac{1}{r_{mm}} c^T \tilde{R}^{-T} & \frac{1}{r_{mm}}\\ \end{array} \right ],
\ee
which together with the first equality in (\ref{n9}) gives 
$$
\left [ \begin{array}{c} \tilde{R}^{-T} \tilde{b}  \\
 0  \\ \end{array} \right ] = \left [ \begin{array}{cc} 
\tilde{R}^{-T}  & 0 \\
 -\frac{1}{r_{mm}} c^T \tilde{R}^{-T} & \frac{1}{r_{mm}}\\ \end{array} \right ] \left [ \begin{array}{c} \tilde{b}\\ b_m \\ \end{array} \right ] = 
$$
$$
\left [ \begin{array}{c} 
\tilde{R}^{-T} \tilde{b}  \\
 -\frac{1}{r_{mm}} c^T \tilde{R}^{-T} \tilde{b} + \frac{b_m}{r_{mm}}\\ \end{array} \right ],
$$
and therefore
$$
0 ~=~ -\frac{1}{r_{mm}} c^T \tilde{R}^{-T} \tilde{b} + \frac{b_m}{r_{mm}}
 ~~{\rm or}~~ c^T \tilde{R}^{-T} \tilde{b} = b_m.
$$
Eventually, we proved that if (\ref{n11p}) holds (for $i=m$), then
\be
\label{n12}
 c^T \tilde{R}^{-T} \tilde{b} = b_m,
\ee
where the elements $c, \tilde{R}$ are from (\ref{n5}) and $\tilde{b}$ is the right hand side of the system ${A}^{(i_0)} {x} = {b}^{(i_0)}$. But, also the converse holds, namely: if (\ref{n12}) is true with the above elements, then
(\ref{n11p}) holds (for $i=m$). This is true if we assume that  in the QR decomposition (\ref{n5}) the diagonal elements satisfy $r_{ii} > 0, \forall i$, which gives us the unicity of the factor $R$ in the QR decomposition.
\begin{remark}
\label{frem}
Although the assumption $x^0=0$ in {\bf K} is essential for the proof of Proposition \ref{pp1}, we conjecture  that this result is stil true for a larger class of initial approximations $x^0$.  Unfortunately we do not  have for the moment a theoretical proof in this respect. 
\end{remark}
{\bf Acknowledgements.} We would like to thanks to Prof. Yair Censor for his very helpful comments that have much improved the initial version of the paper.

\end{document}